\newcommand{\R}{\rm{I\!R}}
\def\square{{\setbox0=\hbox{X}\hbox to \ht0{\vrule\hss\vbox to \ht0{
  \hrule width \ht0\vfil\hrule width \ht0}\vrule}}}
\DeclareMathOperator{\argmin}{argmin}
\title{Zero Order Stochastic Weakly Convex Composite Optimization}
\begin{document}

\title{Zero Order Stochastic Weakly Convex Composite Optimization}

\titlerunning{Zero Order Stochastic Weakly Convex Composite Optimization}        

\author{V. Kungurtsev\and
	F. Rinaldi 
}


\institute{
V. Kungurtsev \at
Department of Computer Science, Faculty of Electrical Engineering,\\
Czech Technical University in Prague
Prague, Czech Republic\\
 \email{vyacheslav.kungurtsev@fel.cvut.cz}\\
Research supported by
  the OP VVV project
CZ.02.1.01/0.0/0.0/16 019/0000765 ``Research Center for Informatics''      
 \and
F. Rinaldi \at
              Dipartimento di Matematica ``Tullio Levi-Civita'', Universit\`a di Padova \\
              Via Trieste, 63, 35121 Padua, Italy \\
              Tel.: +39-049-8271424\\
              \email{rinaldi@math.unipd.it}           
}
\date{Received: date / Accepted: date}

\maketitle

\begin{abstract}
In this paper we consider stochastic weakly convex composite problems, however
without the existence of a stochastic subgradient oracle. 
We present a derivative free algorithm that uses a two point
approximation for computing a gradient estimate of the smoothed 
function. We prove convergence at a similar rate
as state of the art methods, however with a larger constant, and report some numerical
results showing the effectiveness of the approach. 
\end{abstract}

\keywords{ Derivative Free Optimization\and Zero Order Optimization\and  Stochastic Optimization\and Weakly Convex Functions}
\subclass{ 90C56\and 90C15  \and 65K05}


\section{Introduction}
In this paper, we study the following class of problems:
\begin{equation}\label{eq:prob}
\min_{x\in\mathbb{R}^n} \, \phi(x) := f(x)+r(x),
\end{equation}
where it is assumed that,
\begin{enumerate}
\item $f(\cdot)$ is $\rho$-weakly convex, i.e., $f(x)+\rho\|x\|^2$ is convex for some $\rho>0$, and locally Lipschitz with constant $L_0$.
\item $f(\cdot)$ is nonsmooth, as it is not necessarily continuously differentiable.
\item The subgradients $\partial f$ are not available. Furthermore, function evaluations $f(x)$ are not available,
but rather noisy approximations thereof. Thus we are in the noisy or stochastic zero order/derivative free optimization setting.
We thus write 
$$f(x)=\mathbb{E}_\xi[F(x;\xi)]=\int_{\Xi} F(x,\xi)dP(\xi),$$ 
with $\{F(\cdot, \xi), \ \xi\in \Xi\}$ a collection of real valued functions and $P$ a probability distribution over the set $\Xi$ to be precise.
\item $r(\cdot)$ is convex (but not necessarily continuously differentiable) and simple.
\end{enumerate}
One standard subset of composite functions is given by $f(x)=h(c(x))$ where $h$ is nonsmooth and convex
and $c(x)$ is continuously differentiable but non-convex (see, e.g., \cite{davis2019stochastic} and references therein).

We further note that the $\rho$-weak convexity property for a given function $f$ is equivalent to hypomononicity of its subdifferential map, that is
\begin{equation}\label{eq:hypocoer}
\langle v-w,x-y\rangle\ge-\rho\|x-y\|^2 
\end{equation}
for $v\in\partial f(x)$ and $w\in\partial f(y)$.


A canonical exact method for solving a weakly convex stochastic optimization problems is given as repeated
iterations of,
\begin{equation}\label{eq:proxmethod}
x_{k+1} := \text{argmin}_y \left\{ f_{x_k}(y;S_k)+r(y)+\frac{1}{2\alpha_k}\|y-x_k\|^2\right\}
\end{equation}
where $\alpha_k>0$ is a stepsize sequence, typically taken to satisfy $\alpha_k\to 0$, and
$f_{x_k}(y;S_k)$ is approximating $f$ at $x_k$ using a noisy estimate $S_k$ of the data. 
A basic stochastic subgradient method will use the linear model $$f_{x_k}(y;S_k) =f(x_k) +\xi^T (y-x_k)$$ where 
$\xi\approx \bar \xi \in \partial f(x_k)$. When using this approach, it is common to consider
the existence of  some oracle of an unbiased estimate of an element of the subgradient that enables one to build up the approximation $f_{x_k}$ with favorable properties (see,e.g., \cite{davis2019stochastic}
or~\cite{duchi2018stochastic}).
In our case we assume such an oracle is not available, and we only get access, at a point $x$, to a noisy function value observation $F(x,\xi)$.
Stochastic problems with only functional information available often arise in optimization, machine learning and statistics.
A classic example is simulation based optimization (see,e.g., \cite{amaran2016simulation,larson2019derivative} and references therein), where function evaluations usually represent the experimentally obtained 
behavior of a system and in practice are given by means of specific simulation tools, 
hence no internal or analytical knowledge for the functions is provided. Furthermore, evaluating the function at a given point is in many cases a
computationally expensive task, and only a limited budget of evaluations is available in the end. 
In machine learning and statistics, a widely studied problem is bandit optimization, where a player and an adversary compete, with the player strategy expressed using the variable vector $x$ 
and the adversary strategy given by $\xi$ (the player has no control over this set of variables), and player's loss function is $F(x,\xi)$. 
The goal is choosing the optimal strategy based only on observations of the function values (see, e.g., Section 4.2 in \cite{larson2019derivative} for further details).
Recently, suitable derivative free/zero order optimization methods have been proposed for handling stochastic functions (see,e.g., \cite{blanchet2019convergence, chen2018stochastic, duchi2015optimal, larson2016stochastic}).
For a complete overview of stochastic derivative free/zero order  methods, we refer the interest reader to the recent review \cite{larson2019derivative}.
The reason why we focus on weakly convex functions is that such a class of function shows up in the modeling of many different 
real world applications like, e.g., (robust) phase retrieval, sparse dictionary learning, conditional value at risk (see \cite{davis2019stochastic} for a complete description of those problems) and there might be cases
where we only get access, at a point $x$, to an unbiased estimate of the loss function $F(x,\xi)$. We thus need to resort to a stochastic derivative free/zero order approach in order to handle our problem.

At the time of writing, zero order, or derivative free optimization for weakly convex problems has not been
investigated. There are a number of works for stochastic nonconvex zero order optimization 
(e.g.,~\cite{balasubramanian2019zeroth}) and
nonsmooth convex derivative free optimization (e.g.,~\cite{duchi2015optimal}).

In the case of stochastic weakly convex optimization but with access to a noisy element of the
subgradient, there are a few works that have appeared fairly recently. 
asymptotic convergence was shown in~\cite{duchi2018stochastic}, which proves convergence with probability one
for the method given in~\eqref{eq:proxmethod}. Non-asymptotic convergence, as in convergence rates in expectation, 
is given in the two 
papers~\cite{davis2019stochastic} and~\cite{li2019incremental}. 

In this paper, we follow the approach proposed in \cite{duchi2015optimal} to handle nonsmoothness in our problem. We consider a smoothed 
version of the objective function, and we then apply a two point strategy to estimate its gradient. This tool is thus embedded in a proximal 
algorithm similar to the one described in \cite{davis2019stochastic} and enables us to get convergence at a similar rate as the original method
(although with larger constants).

The rest of the paper is organized as follows. In Section~\ref{s:alg} we describe the algorithm and provide some preliminary lemmas needed
for the subsequent analysis. Section~\ref{s:convergence} contains the convergence proof. In Section~\ref{s:numerics} we show some numerical
results on two standard test cases. Finally we conclude in Section~\ref{s:conclusion}.

\section{Two Point Estimate and Algorithmic Scheme}\label{s:alg}

We use the two point estimate presented in~\cite{duchi2015optimal} to generate an approximation to an element of the subdifferential.
In particular, consider the smoothing of the function $f$,
\[
f_{u_{1,t}}(x) =\mathbb{E}[f(x+zu_{1,t})] = \int_{z} f(x+zu_{1,t}) dz
\]
where $z$ is a standard normal variable, i.e., $z\sim \mathcal{N}(0,I_n)$. The two point estimate we use is given by,
\begin{equation}\label{eq:g}
\begin{split}
g_t &=G(x_t,u_{1,t},u_{2,t},Z_{1,t},Z_{2,t},\xi_t) =\\
& = \frac{F(x_t+u_{1,t} Z_{1,t}+u_{2,t} Z_{t,2};\xi_t)-F(x_t+u_{1,t} Z_{t,1};\xi_t)}{u_{2,t}} Z_{t,2},
\end{split}
\end{equation}
where $\{u_{1,t}\}_{t=1}^\infty$, $\{u_{2,t}\}_{t=1}^\infty$ are two nonincreasing sequences of positive parameters such that $u_{2,t}\leq u_{1,t}/2$,
$x_t$ is the given point, $\xi$ is the sample of the stochastic oracle, $Z_1\sim \mu_1$ and $Z_2\sim \mu_2$ are two vectors independently sampled from  
distributions $\mu_1\sim \mathcal{N}(0,I_n)$ and $\mu_2\sim \mathcal{N}(0,I_n)$. 

We now report a result that provides theoretical guarantees on the error in the estimate.
\begin{lemma}\cite[Lemma 2]{duchi2015optimal} The gradient estimator \eqref{eq:g} has expectation 
\begin{equation}\label{eq:ex1}
\mathbb{E}[g_t] = \nabla f_{u_{1,t}}(x)+\frac{u_{2,t}}{u_{1,t}} e(x,u_{1,t},u_{2,t}), 
\end{equation}
where $\|e(x,u_{1,t},u_{2,t})\|\le E$ for all $t$, with $E>0$ finite and real valued.

Furthermore, there exists a $G>0$ such that,
\begin{equation}\label{eq:ex2}
\mathbb{E}[\|g_t\|^2] \le G. 
\end{equation}
\end{lemma}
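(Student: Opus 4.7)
The plan is to compute $\mathbb{E}[g_t]$ by iterated conditioning together with the Stein identity for Gaussian smoothing, bound the resulting bias using the smoothness inherited by the $u_{1,t}$-convolution, and conclude the second-moment bound via Lipschitz continuity of $F$.

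First I would exploit the mutual independence of $\xi_t$, $Z_{1,t}$, $Z_{2,t}$ and apply Fubini to take expectations in the order $\xi_t$, then $Z_{1,t}$ conditionally on $Z_{2,t}$, and finally $Z_{2,t}$. The innermost expectation replaces $F$ by $f$; conditioning on $Z_{2,t}$ and using the definition of $f_{u_{1,t}}$ given in the paper yields
\[
\mathbb{E}_{\xi_t, Z_{1,t} \mid Z_{2,t}}[g_t] = \frac{f_{u_{1,t}}(x + u_{2,t} Z_{2,t}) - f_{u_{1,t}}(x)}{u_{2,t}}\, Z_{2,t}.
\]
Taking the outer expectation over $Z_{2,t}$ and using $\mathbb{E}[Z_{2,t}] = 0$ together with Stein's identity, this simplifies to $\mathbb{E}[g_t] = \nabla (f_{u_{1,t}})_{u_{2,t}}(x)$, i.e.\ the gradient at $x$ of the $u_{2,t}$-Gaussian smoothing of the already-smoothed function $f_{u_{1,t}}$.

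For \eqref{eq:ex1} I would then bound the residual bias $\nabla (f_{u_{1,t}})_{u_{2,t}}(x) - \nabla f_{u_{1,t}}(x)$ via a classical Gaussian-smoothing argument: since $f$ is $L_0$-Lipschitz, $f_{u_{1,t}}$ is $C^{\infty}$ and its gradient is Lipschitz with constant $L' = \mathcal{O}(L_0 \sqrt{n}/u_{1,t})$. Differentiating under the integral and applying Jensen then yields
\[
\|\nabla (f_{u_{1,t}})_{u_{2,t}}(x) - \nabla f_{u_{1,t}}(x)\| \le L'\, u_{2,t}\, \mathbb{E}[\|Z_{2,t}\|] \le \frac{u_{2,t}}{u_{1,t}}\, E,
\]
with $E = \mathcal{O}(L_0 n)$ uniform in $t$, which is precisely \eqref{eq:ex1}. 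For \eqref{eq:ex2}, Lipschitz continuity of $F(\cdot, \xi)$ immediately gives the pointwise bound $\|g_t\| \le L_0 \|Z_{2,t}\|^2$, whence $\mathbb{E}[\|g_t\|^2] \le L_0^2\, \mathbb{E}[\|Z_{2,t}\|^4] = L_0^2\, n(n+2) =: G$.

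The hard part is the bias step: the required $\mathcal{O}(u_{2,t}/u_{1,t})$ dependence hinges on the precise $\mathcal{O}(1/u_{1,t})$ scaling of the Lipschitz constant of $\nabla f_{u_{1,t}}$, a regularity that the Gaussian convolution provides even though $f$ itself is non-smooth. This is exactly why the two-scale construction with the constraint $u_{2,t} \le u_{1,t}/2$ is introduced in the first place.
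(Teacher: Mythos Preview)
Your argument is correct and is essentially the standard proof of this result: iterated expectation reduces $\mathbb{E}[g_t]$ to $\nabla (f_{u_{1,t}})_{u_{2,t}}(x)$ via Stein's identity, the bias is then controlled through the $L_0\sqrt{n}/u_{1,t}$ Lipschitz constant of $\nabla f_{u_{1,t}}$, and the second moment follows from the pointwise Lipschitz bound on $F(\cdot,\xi)$ together with $\mathbb{E}\|Z\|^4=n(n+2)$.

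There is, however, nothing to compare against: the paper does \emph{not} prove this lemma. It is stated with an explicit citation to \cite[Lemma~2]{duchi2015optimal} and invoked as a black box. So your proposal is not an alternative route; it is simply a reconstruction of the cited result. One small caveat worth flagging: your second-moment bound uses that each realization $F(\cdot,\xi)$ is $L_0$-Lipschitz, whereas the present paper only states the Lipschitz property for $f=\mathbb{E}_\xi F(\cdot,\xi)$. This stronger per-sample assumption is indeed made in~\cite{duchi2015optimal}, so the proof goes through in that setting, but it is not literally among the hypotheses listed here.
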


We recall some other useful facts. From~\cite{duchi2012randomized} it holds that,
\begin{equation}\label{eq:smoothacc}
f(x)\le f_{u_{1,t}}(x)\le f(x)+u_{1,t} \bar B \sqrt{n+2}
\end{equation}
with $\bar B>0$ a finite real valued parameter,  and that $f_{u_{1,t}}(x)$ is Lipschitz continuously differentiable with constant $\frac{L_0\sqrt{n}}{u_{1,t}}$, 
and
\begin{equation}\label{eq:boundnabfu}
\|\nabla f_{u_{1,t}}(x)\|^2\le L_0^2.
\end{equation}

In addition from~\cite[Lemma 2.1]{balasubramanian2019zeroth} it holds that,
\begin{equation}\label{eq:gnormsq}
\mathbb{E}[\|g_t-\nabla f_{u_{1,t}}(x)\|^2] \le \sigma_1 (n+5)+\sigma_2 u_{2,t}^2 (n+3)^3 = \sigma(u_{2,t},n) \le \bar{\sigma}
\end{equation}

Finally, we report another useful result.
\begin{lemma}\label{lem:almosthypo}
The following inequality holds: 
\[
\langle \nabla f_{u}(x)-\nabla f_{u}(y),x-y\rangle \ge -\rho\|x-y\|^2-4L_0 u \|x-y\|.
\]
\end{lemma}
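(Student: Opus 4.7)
The plan is to inherit the hypomonotonicity property~\eqref{eq:hypocoer} from $f$ to the smoothed function $f_u := f_{u_{1,t}}$; once that is in hand, the stated inequality follows immediately, since $-4L_0u\|x-y\|\le 0$ is merely slack (so the proof in fact gives the stronger bound $\ge -\rho\|x-y\|^2$).

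First I would express the smoothed gradient in the Rademacher form
\[
\nabla f_u(x) = \mathbb{E}_z\bigl[v(x+zu)\bigr],\qquad z\sim\mathcal{N}(0,I_n),
\]
where $v(\cdot)\in\partial f(\cdot)$ is a measurable subgradient selection. This representation is standard for locally Lipschitz $f$: Rademacher's theorem gives a.e.\ differentiability, the bound $\|v(\cdot)\|\le L_0$ dominates the integrand, and differentiation under the integral sign is then justified.

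Second I would apply~\eqref{eq:hypocoer} pointwise at the shifted pair $(x+zu,\,y+zu)$. Since the difference $(x+zu)-(y+zu) = x-y$ is independent of $z$, we obtain, for every realization of $z$,
\[
\bigl\langle v(x+zu)-v(y+zu),\,x-y\bigr\rangle\ge -\rho\|x-y\|^2,
\]
and taking expectations over $z$ (which may be brought inside the linear inner product) yields
\[
\bigl\langle\nabla f_u(x)-\nabla f_u(y),\,x-y\bigr\rangle\ge -\rho\|x-y\|^2,
\]
from which the claim follows.

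The only non-elementary ingredient is the Rademacher selection together with differentiation under the integral sign; this is the one place where measure-theoretic care is needed in the presence of nonsmoothness. A clean alternative that bypasses it is to observe that Gaussian convolution preserves convexity: decomposing $f$ as convex-plus-quadratic via its $\rho$-weak convexity, a direct computation using $\mathbb{E}_z\|x+zu\|^2=\|x\|^2+u^2 n$ shows that $f_u$ is itself $\rho$-weakly convex with the same constant, so that~\eqref{eq:hypocoer} applied directly to $f_u$ (whose subdifferential is the singleton $\{\nabla f_u\}$) gives the same conclusion.
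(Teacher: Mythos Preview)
Your argument is correct and in fact yields the sharper inequality $\langle\nabla f_u(x)-\nabla f_u(y),x-y\rangle\ge-\rho\|x-y\|^2$, so the term $-4L_0u\|x-y\|$ is indeed pure slack. Both of your routes are sound: the Rademacher representation $\nabla f_u(x)=\mathbb{E}_z[\nabla f(x+zu)]$ together with pointwise application of~\eqref{eq:hypocoer} at the shifted pair $(x+zu,y+zu)$ works because the shift cancels in the difference; the alternative via preservation of convexity under convolution is even cleaner and sidesteps the measure-theoretic care entirely.

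The paper proceeds differently. It expresses $\nabla f_u$ through a directional-derivative limit and then decomposes the integrand so as to isolate the \emph{unshifted} increments $f(x+te_x)-f(x)-f(y+te_x)+f(y)$, applies~\eqref{eq:hypocoer} at the original points $x,y$ to extract $-\rho\|x-y\|^2$, and finally bounds the leftover ``smoothing correction'' terms (each of the form $f(\cdot+zu)-f(\cdot)$) by Lipschitz continuity, which is where the extra $4L_0u\|x-y\|$ appears. Your key observation---that one may apply hypomonotonicity directly at the shifted points, since $(x+zu)-(y+zu)=x-y$---eliminates this detour and the attendant loss. The paper's route is more elementary in that it avoids any appeal to Rademacher or differentiation under the integral, at the cost of a weaker (though still sufficient) bound; your route is tighter and shows that $f_u$ inherits $\rho$-weak convexity exactly, which may be of independent interest downstream.
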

\begin{proof} By using the definition of $f_{u_{1,t}}(x)$, we have
\[
\begin{array}{l}
\langle \nabla f_{u}(x)-\nabla f_{u}(y),x-y\rangle= \left\langle\nabla\left(\int_z \left(f(x+zu)-f(y+zu)\right)dz\right),x-y\right\rangle\\
\end{array}
\]
After a proper rewriting, we use \eqref{eq:hypocoer} to get a lower bound on the considered term:
 \[
\begin{array}{l}
\left\langle\left(\lim_{t\to 0}\frac{\int_z \left(f(x+zu+te_x)-f(x+zu)-f(y+zu+te_x)+f(y+zu)\right)}{t} dz\right)
,x-y\right\rangle
\\ \ge -\rho\|x-y\|^2+\\ 
+\left\langle\left(\lim_{t\to 0}\frac{\int_z \left(f(x+zu+te_x)-f(x+te_x)-f(x+zu)+f(x)-f(y+zu+te_x)+f(y+te_x)+f(y+zu)-f(y)\right)}{t} dz\right)
,x-y\right\rangle
\\ \qquad \ge -\rho\|x-y\|^2-4L_0u\|x-y\|,
\end{array}
\]
where the last inequality is obtained by considering the Lipschitz continuity of the function.
\end{proof}

The algorithmic scheme used in the paper is reported in Algorithm \ref{alg:PSDFA}, where with $\mbox{prox}_{\alpha r}$ we indicate the following
function:
\[
 \mbox{prox}_{\alpha r}(x)=\displaystyle\argmin_y \{r(y)+\frac{1}{2\alpha}\|y-x\|^2\}.
\]
At each iteration $t$ we simply build a two point estimate $g_t$ of the gradient related to the smoothed function and then apply 
a proximal map to the point  $x_t-\alpha_t g_t$, with $\alpha_t>0$ a suitably chosen stepsize. 

We let $\alpha_t$ be a diminishing step-size and set 
\begin{equation}\label{u12:vals}
u_{1,t}= \alpha_t^2\quad \mbox{and}\quad  u_{2,t} = \alpha_t^3.  
\end{equation}

\begin{algorithm}[h]
\caption{Proximal Stochastic Derivative Free Algorithm}
\label{alg:PSDFA}
  \begin{algorithmic}
  \par\vspace*{0.1cm}
  \item$\,\,\,$\hspace*{0.3truecm} \textbf{Input:} $x_0\in \R^n$, sequence $\{\alpha_t\}_{t\geq 0}$, and iteration count $T$.
     \par\vspace*{0.3cm}
 \item$\,\,\,$\hspace*{0.30cm} \textbf{For} $t = 0, 1, \dots, T$ 
    \par\vspace*{0.1cm}
 \item$\,\,\,$\hspace*{0.90cm} \textbf{Step 1)} Sample $\xi_t \sim P$, $Z_1\sim \mu_1$ and $Z_2\sim \mu_2$.
 \par\vspace*{0.1cm}
  \item$\,\,\,$\hspace*{0.90cm} \textbf{Step 2)} Set $u_{1,t}= \alpha_t^2$ and  $u_{2,t} = \alpha_t^3$.  
  \par\vspace*{0.1cm}
   \item$\,\,\,$\hspace*{0.90truecm} \textbf{Step 3)} Build the two point estimate $g_t=G(x_t,u_{1,t},u_{2,t},Z_{1,t},Z_{2,t},\xi_t)$.
   \par\vspace*{0.1cm}
   \item$\,\,\,$\hspace*{0.90truecm} \textbf{Step 4)} Set $x_{t+1}=\mbox{prox}_{\alpha_tr}(x_t-\alpha_t g_t)$.
  \par\vspace*{0.1cm}
 \item$\,\,\,$\hspace*{0.30cm} \textbf{End For}
  \par\vspace*{0.3cm}
 \item$\,\,\,$\hspace*{0.30cm} Sample $t^*\in\{0,\dots,T\}$ according to $\mathbb{P}(t^*=t)=\alpha_t/\sum_{i=0}^T \alpha_i$.
 \par\vspace*{0.1cm}
 \item$\,\,\,$\hspace*{0.30cm} \textbf{Return} $x^*_{t}$. 
  \end{algorithmic}
\end{algorithm}

We thus have in our scheme a derivative free version of  Algorithm 3.1 reported in \cite{davis2019stochastic}. 
\section{Convergence of the Derivative Free Algorithm}\label{s:convergence}
We now analyze the convergence properties of  Algorithm \ref{alg:PSDFA}. We follow~\cite[Section 3.2]{davis2019stochastic} 
in the proof of our results. We consider a value $\bar \rho> \rho$, and assume $\alpha_t<\min\left\{\frac{1}{\bar\rho},\frac{\bar\rho-\rho}{2}\right\}$ for all $t$.

We first define the function
\[
\phi^{u,t}(x) = f_{u_{1,t}}(x)+r(x),
\]
and introduce the Moreau envelope function
\[
\phi^{u,t}_{1/\lambda}(x) =\min_y \phi^{u,t}(y)+\frac{\lambda}{2}\|y-x\|^2\ ,
\]
with the proximal map 
\[
\text{prox}_{\phi^{u,t}/\lambda} (x)= \displaystyle\argmin_y \{\phi^{u,t}(y)+\frac{\lambda}{2}\|y-x\|^2\}. 
\]

We use the corresponding definition of $\phi_{1/\lambda}(x)$ as well in the convergence theory,
\[
\phi_{1/\lambda}(x) =\min_y \phi(y)+\frac{\lambda}{2}\|y-x\|^2 = \min_y f(y)+r(y)+\frac{\lambda}{2}\|y-x\|^2
\]
To begin with let 
\[
\hat x_t = \text{prox}_{\phi^{u,t}/\bar{\rho}} (x_t). 
\]
Some of the steps follow along the same lines given in~\cite[Lemma 3.5]{davis2019stochastic}, owing to the smoothness of $f_{u_{1,t}}(x)$.

We derive the following recursion lemma, which establishes an important descent property for the iterates.
\begin{lemma}\label{lem:recursion}
Let $\alpha_t$ satisfy,
\begin{equation}\label{alphacond}
\alpha_t \le \frac{\bar\rho-\rho}{(1+\bar\rho^2-2\bar\rho\rho+4\delta_0 L_0)}\ .
\end{equation}
where $\delta_0=1-\alpha_0 \bar\rho$.

Then it holds that there exists a $B$ independent of $t$ such that,
\[
\mathbb{E}_t\|x_{t+1}-\hat x_t\|^2 \le \|x_t-\hat x_t\|^2+\alpha^2_t B-\alpha_t(\bar\rho-\rho)\|x_t-\hat x_t\|^2.
\]
\end{lemma}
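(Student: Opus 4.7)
The plan is to combine nonexpansiveness of the proximal operator with the first-order optimality of $\hat x_t$. Writing out the optimality condition of the Moreau subproblem $\min_y \{\phi^{u,t}(y) + \frac{\bar\rho}{2}\|y - x_t\|^2\}$ produces a subgradient $\zeta^\ast \in \partial r(\hat x_t)$ satisfying $\zeta^\ast = -\nabla f_{u_{1,t}}(\hat x_t) + \bar\rho(x_t - \hat x_t)$; equivalently, $\hat x_t = \mathrm{prox}_{\alpha_t r}(\hat x_t + \alpha_t\zeta^\ast)$. Nonexpansiveness of $\mathrm{prox}_{\alpha_t r}$ then yields
\[
\|x_{t+1} - \hat x_t\|^2 \le \|(x_t - \alpha_t g_t) - (\hat x_t + \alpha_t\zeta^\ast)\|^2 = \bigl\|(1 - \alpha_t\bar\rho)(x_t - \hat x_t) - \alpha_t\bigl(g_t - \nabla f_{u_{1,t}}(\hat x_t)\bigr)\bigr\|^2.
\]

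Next, I expand the square and take conditional expectation $\mathbb{E}_t$. The quadratic piece becomes $(1 - \alpha_t\bar\rho)^2\|x_t - \hat x_t\|^2$. In the cross piece, I use \eqref{eq:ex1} to substitute $\mathbb{E}_t g_t = \nabla f_{u_{1,t}}(x_t) + \frac{u_{2,t}}{u_{1,t}} e_t$ with $\|e_t\| \le E$; the schedule \eqref{u12:vals} gives $u_{2,t}/u_{1,t} = \alpha_t$ and $u_{1,t} = \alpha_t^2$. Lemma \ref{lem:almosthypo} applied to $\nabla f_{u_{1,t}}(x_t) - \nabla f_{u_{1,t}}(\hat x_t)$ then supplies the lower bound $-\rho\|x_t - \hat x_t\|^2 - 4L_0\alpha_t^2\|x_t - \hat x_t\|$ on the inner product. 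For the squared-noise piece I bound $\mathbb{E}_t\|g_t - \nabla f_{u_{1,t}}(\hat x_t)\|^2 \le 2\mathbb{E}_t\|g_t\|^2 + 2\|\nabla f_{u_{1,t}}(\hat x_t)\|^2 \le 2G + 2L_0^2$ using \eqref{eq:ex2} and \eqref{eq:boundnabfu}.

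Two weighted Young's inequalities then absorb the residuals that are linear in $\|x_t - \hat x_t\|$. The weak-convexity remainder $8L_0(1-\alpha_t\bar\rho)\alpha_t^3\|x_t - \hat x_t\|$ is split, with weight chosen proportional to $\delta_0$, into $4L_0\delta_0\alpha_t^2\|x_t - \hat x_t\|^2$ plus an $O(\alpha_t^4)$ additive term; the bias contribution $2E(1-\alpha_t\bar\rho)\alpha_t^2\|x_t - \hat x_t\|$ is split, using $1-\alpha_t\bar\rho \le 1$, into $\alpha_t^2\|x_t - \hat x_t\|^2$ plus $E^2\alpha_t^2$. Adding up, the coefficient of $\|x_t - \hat x_t\|^2$ equals $1 - 2\alpha_t(\bar\rho - \rho) + \alpha_t^2\bigl[1 + \bar\rho^2 - 2\bar\rho\rho + 4\delta_0 L_0\bigr]$, and the stepsize restriction \eqref{alphacond} is precisely what forces this to be at most $1 - \alpha_t(\bar\rho - \rho)$. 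All remaining additive pieces, being $O(\alpha_t^2)$ with constants depending only on $\rho, \bar\rho, L_0, E, G, \delta_0$, combine into a single $B\alpha_t^2$ with $B$ independent of $t$.

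The main obstacle will be the Young's-inequality bookkeeping: the weights must be chosen so that the $\alpha_t^2\|x_t - \hat x_t\|^2$ coefficient matches the denominator of \eqref{alphacond} exactly, and the gradient-bias cross term (the source of the ``$+1$'' in that denominator) has to be tracked carefully through the schedule $u_{2,t}/u_{1,t} = \alpha_t$. Once the weights are fixed, the remainder is routine substitution of the previously established smoothness, expectation, and variance bounds.
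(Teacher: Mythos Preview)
Your proposal is correct and follows essentially the same route as the paper: represent $\hat x_t$ as a proximal point of $r$, apply nonexpansiveness of $\mathrm{prox}_{\alpha_t r}$, expand the squared norm, invoke \eqref{eq:ex1} for the bias, Lemma~\ref{lem:almosthypo} for the inner product, a second-moment bound for the noise, then Young's inequalities and the stepsize condition \eqref{alphacond} to collect the coefficient of $\|x_t-\hat x_t\|^2$. The only cosmetic differences are that the paper inserts $\nabla f_{u_{1,t}}(x_t)$ before expanding (yielding a $(1+\alpha_t^2)$ multiplicative factor) and uses \eqref{eq:gnormsq} for the variance, whereas you expand directly and use \eqref{eq:ex2} with \eqref{eq:boundnabfu}; your explicit choice of Young's weight to land on $4L_0\delta_0\alpha_t^2$ (rather than $4L_0\delta_t\alpha_t^2$) is in fact slightly cleaner for matching \eqref{alphacond}.
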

\begin{proof}
First we see that  $\hat x_t$ can be obtained as a proximal point of $r$:
\[
\begin{array}{l}
\alpha_t \bar{\rho} (x_t-\hat{x}_t)\in\alpha_t \partial r(\hat{x}_t)+\alpha_t \nabla f_{u_{1,t}}(\hat x_t)  
\Longleftrightarrow 
\\\\ \qquad 
\alpha_t \bar{\rho} x_t -\alpha_t \nabla f_{u_{1,t}}(\hat x_t)  +(1-\alpha_t \bar{\rho} )\hat{x}_t \in \hat{x}_t+\alpha_t \partial r(\hat{x}_t)
\\\\ \qquad
\Longleftrightarrow \hat x_t =\text{prox}_{\alpha_t r}\left(\alpha_t \bar{\rho} x_t-\alpha_t \nabla f_{u_{1,t}}(\hat x_t)+(1-\alpha_t \bar{\rho})\hat{x}_t\right).
\end{array}
\]
We notice that the last equivalence follows from the optimality conditions related to the proximal subproblem.
Letting $\delta_t=1-\alpha_t \bar\rho$, we get, 
\[
\begin{array}{l}
\mathbb{E}_t\|x_{t+1}-\hat x_t\|^2=\mathbb{E}_t\|\text{prox}_{\alpha_t r}(x_t-\alpha_t g_t)-\text{prox}_{\alpha_t r} (\alpha_t \bar\rho x_t
-\alpha_t \nabla f_{u_{1,t}}(x_t)+\delta_t \hat x_t)\|^2 \\
\quad \le \mathbb{E}_t\left\|x_t-\alpha_t g_t-(\alpha_t \bar\rho x_t-\alpha_t \nabla f_{u,1}(\hat x_t)+\delta_t \hat x_t)\right\|^2, 
\end{array}
\]
where the inequality is obtained by considering the non-expansiveness property of the proximal map $\text{prox}_{\alpha_t r}(x)$. We thus can write the following chain of equalities:
\[
\begin{array}{l}
\mathbb{E}_t\left\|x_t-\alpha_t g_t-(\alpha_t \bar\rho x_t-\alpha_t \nabla f_{u,1}(\hat x_t)+\delta_t \hat x_t)\right\|^2= \\
\quad = \mathbb{E}_t \left\|\delta_t(x_t-\hat x_t)-\alpha_t (g_t-\nabla f_{u_{1,t}}(\hat x_t) ) \right\|^2 =\\ 
\quad =\mathbb{E}_t \left\|\delta_t(x_t-\hat x_t)-\alpha_t (\nabla f_{u_{1,t}}(x_t)-\nabla f_{u_{1,t}}(\hat x_t) )-\alpha_t(g_t-\nabla f_{u_{1,t}}(x_t)) \right\|^2 = \\
\quad =\mathbb{E}_t \left\|\delta_t(x_t-\hat x_t)-\alpha_t (\nabla f_{u_{1,t}}(x_t)-\nabla f_{u_{1,t}}(\hat x_t) )\right\|^2
\\ \quad\quad-2\alpha_t\mathbb{E}_t
\left[\left\langle \delta_t(x_t-\hat x_t)-\alpha_t (\nabla f_{u_{1,t}}(x_t)-\nabla f_{u_{1,t}}(\hat x_t)) ,g_t-\nabla f_{u_{1,t}}(x_t)\right\rangle\right]
\\ \quad\quad+\alpha_t^2\mathbb{E}_t\left\|g_t-\nabla f_{u_{1,t}}(x_t)\right\|^2, \\
\end{array}
\]
with the first equality obtained by rearranging the terms inside the norm, the second one by simply adding and subtracting $\alpha_t \nabla f_{u_{1,t}}(x_t)$ to those terms,  and the third one
by taking into account the definition of euclidean norm and the basic properties of the expectation.  Now, we can get the following
\[
\begin{array}{l}
\mathbb{E}_t \left\|\delta_t(x_t-\hat x_t)-\alpha_t (\nabla f_{u_{1,t}}(x_t)-\nabla f_{u_{1,t}}(\hat x_t) )\right\|^2
\\ \quad\quad-2\alpha_t\mathbb{E}_t
\left[\left\langle \delta_t(x_t-\hat x_t)-\alpha_t (\nabla f_{u_{1,t}}(x_t)-\nabla f_{u_{1,t}}(\hat x_t)) ,g_t-\nabla f_{u_{1,t}}(x_t)\right\rangle\right]
\\ \quad\quad+\alpha_t^2\mathbb{E}_t\left\|g_t-\nabla f_{u_{1,t}}(x_t)\right\|^2\\
\quad = \left\|\delta(x_t-\hat x_t)-\alpha_t (\nabla f_{u_{1,t}}(x_t)-\nabla f_{u_{1,t}}(\hat x_t) )\right\|^2
\\ \quad\quad-2\alpha_t
\left[\left\langle \delta_t(x_t-\hat x_t)-\alpha_t (\nabla f_{u_{1,t}}(x_t)-\nabla f_{u_{1,t}}(\hat x_t)) ,\mathbb{E}[g_t]-\nabla f_{u_{1,t}}(x_t))\right\rangle\right]
\\ \quad\quad+\alpha_t^2\mathbb{E}_t\left\|g_t-\nabla f_{u_{1,t}}(x_t)\right\|^2 \\
\quad = \left\|\delta_t(x_t-\hat x_t)-\alpha_t (\nabla f_{u_{1,t}}(x_t)-\nabla f_{u_{1,t}}(\hat x_t) )\right\|^2
\\ \quad\quad-2\alpha_t
\left[\left\langle \delta_t(x_t-\hat x_t)-\alpha_t (\nabla f_{u_{1,t}}(x_t)-\nabla f_{u_{1,t}}(\hat x_t)) ,\frac{u_{2,t}}{u_{1,t}} e(x,u_{1,t},u_{2,t}))\right\rangle\right]
\\ \quad\quad+\alpha_t^2\mathbb{E}_t\left\|g_t-\nabla f_{u_{1,t}}(x_t)\right\|^2. 
\end{array}
\]
The first equality, in this case, was obtained by explicitly  taking expectation wrt to $\xi_t$, while we used equation \eqref{eq:ex1} to get the second one. 
We now try to upper bound the terms in the summation:
\[
\begin{array}{l}
\left\|\delta_t(x_t-\hat x_t)-\alpha_t (\nabla f_{u_{1,t}}(x_t)-\nabla f_{u_{1,t}}(\hat x_t) )\right\|^2
\\ \quad\quad-2\alpha_t
\left[\left\langle \delta_t(x_t-\hat x_t)-\alpha_t (\nabla f_{u_{1,t}}(x_t)-\nabla f_{u_{1,t}}(\hat x_t)) ,\frac{u_{2,t}}{u_{1,t}} e(x,u_{1,t},u_{2,t}))\right\rangle\right]
\\ \quad\quad+\alpha_t^2\mathbb{E}_t\left\|g_t-\nabla f_{u_{1,t}}(x_t)\right\|^2\\
\quad \leq \left\|\delta_t(x_t-\hat x_t)-\alpha_t (\nabla f_{u_{1,t}}(x_t)-\nabla f_{u_{1,t}}(\hat x_t) )\right\|^2
\\ \quad\quad+2\left(\alpha_t
\left\|\delta_t(x_t-\hat x_t)-\alpha_t (\nabla f_{u_{1,t}}(x_t)-\nabla f_{u_{1,t}}(\hat x_t))\right\|\right)
\left(\left|\frac{u_{2,t}}{u_{1,t}} \right|\left\|e(x,u_{1,t},u_{2,t}))\right\|\right)
\\ \quad\quad+\alpha_t^2\mathbb{E}_t\left\|g_t-\nabla f_{u_{1,t}}(x_t)\right\|^2  \\
\quad \le \left\|\delta_t(x_t-\hat x_t)-\alpha_t (\nabla f_{u_{1,t}}(x_t)-\nabla f_{u_{1,t}}(\hat x_t) )\right\|^2+\\
\quad\quad+\alpha^2_t\left\| \delta_t(x_t-\hat x_t)-\alpha_t (\nabla f_{u_{1,t}}(x_t)-\nabla f_{u_{1,t}}(\hat x_t)) \right\|^2
+\alpha_t^2 E^2
+\alpha_t^2\bar\sigma. \\
\end{array}
\]
The first inequality was obtained by using Cauchy-Schwarz.  
We then used the inequality $2a\cdot b\leq a^2+b^2$ combined with equation \eqref{u12:vals} and $\|e(x,u_{1,t},u_{2,t})\|\le E$  to upper bound the second term in the summation, and  
equation \eqref{eq:gnormsq} to upper bound the third term, thus getting the last inequality. Hence we can write
\[
\begin{array}{l}
\left\|\delta_t(x_t-\hat x_t)-\alpha_t (\nabla f_{u_{1,t}}(x_t)-\nabla f_{u_{1,t}}(\hat x_t) )\right\|^2+\\
\quad\quad +\alpha^2_t
\left\| \delta_t(x_t-\hat x_t)-\alpha_t (\nabla f_{u_{1,t}}(x_t)-\nabla f_{u_{1,t}}(\hat x_t)) \right\|^2
+\alpha_t^2 E^2
+\alpha_t^2\bar\sigma\\
\quad = (1+\alpha_t^2)\delta_t^2 \|x_t-\hat x_t\|^2-2(1+\alpha_t^2)\delta_t \alpha_t\langle x_t-\hat x_t,\nabla f_{u_{1,t}}(x_t)-\nabla f_{u_{1,t}} (\hat x_t)\rangle
\\\quad\quad+(1+\alpha_t^2)\alpha_t^2 \|\nabla f_{u_{1,t}}(x_t)-\nabla f_{u_{1,t}} (\hat x_t)\|^2+\alpha_t^2 E^2 
+\alpha_t^2\bar\sigma  \\
\quad \le (1+\alpha_t^2)\delta_t^2 \|x_t-\hat x_t\|^2+2(1+\alpha_t^2)\delta_t\alpha_t \rho\|x_t-\hat x_t\|^2+8(1+\alpha_t^2)\delta_t L_0 \alpha_t^3 \|x_t-\hat x_t\|\\\quad\quad+4(1+\alpha_t^2)\alpha_t^2 L_0^2
+\alpha_t^2 E^2
+\alpha_t^2\bar\sigma, \\
\end{array}
\]
where the equality is given by simply rearranging the terms in the summation and taking into account the definition of euclidean norm, and the inequality is 
obtained by upper bounding the scalar product by means of Lemma \ref{lem:almosthypo} and the third term in the summation by combining triangular inequality and equation~\eqref{eq:boundnabfu}.
Continuing:

\[
\begin{array}{l}
(1+\alpha_t^2)\delta_t^2 \|x_t-\hat x_t\|^2+2(1+\alpha_t^2)\delta_t\alpha_t \rho\|x_t-\hat x_t\|^2+8(1+\alpha_t^2)\delta_t L_0 \alpha_t^3 \|x_t-\hat x_t\|\\\quad\quad+4(1+\alpha_t^2)\alpha_t^2 L_0^2
+\alpha_t^2 E^2+\alpha_t^2\bar\sigma \\
\quad = (1+\alpha_t^2)\delta_t^2 \|x_t-\hat x_t\|^2+2(1+\alpha_t^2)\delta_t\alpha_t \rho\|x_t-\hat x_t\|^2+8(1+\alpha_t^2)\delta_t L_0 \left(\alpha_t^2\right)\left(\alpha_t \|x_t-\hat x_t\|\right)\\\quad\quad+4(1+\alpha_t^2)\alpha_t^2 L_0^2
+\alpha_t^2 E^2
+\alpha_t^2\bar\sigma \\
\quad \le (1+\alpha_t^2)\delta_t^2 \|x_t-\hat x_t\|^2+2(1+\alpha_t^2)\delta_t\alpha_t \rho\|x_t-\hat x_t\|^2+4(1+\alpha_t^2)\delta_t L_0 \alpha_t^4
\\\quad\quad+4(1+\alpha_t^2)\delta_t L_0\alpha^2_t \|x_t-\hat x_t\|^2+4(1+\alpha_t^2)\alpha_t^2 L_0^2
+\alpha_t^2 E^2
+\alpha_t^2\bar\sigma  \\
\quad = (1+\alpha_t^2)\delta_t^2 \|x_t-\hat x_t\|^2+2(1+\alpha_t^2)\delta_t\alpha_t \rho\|x_t-\hat x_t\|^2+4(1+\alpha_t^2)\delta_t L_0 \alpha_t^2 \|x_t-\hat x_t\|^2\\\quad\quad +4(1+\alpha_t^2)\delta_t L_0 \alpha_t^4+4(1+\alpha_t^2)\alpha_t^2 L_0^2
+\alpha_t^2 E^2
+\alpha_t^2\bar\sigma. \\
\end{array}
\]
The first and last equality are simply obtained by rearranging the terms in the summation. The inequality is obtained by upper
bounding the third term in the summation using inequality $2a\cdot b\leq a^2+b^2$. Finally, we have 
\[
\begin{array}{l}
(1+\alpha_t^2)\delta_t^2 \|x_t-\hat x_t\|^2+2(1+\alpha_t^2)\delta_t\alpha_t \rho\|x_t-\hat x_t\|^2+4(1+\alpha_t^2)\delta_t L_0 \alpha_t^2 \|x_t-\hat x_t\|^2\\\quad\quad +4(1+\alpha_t^2)\delta_t L_0 \alpha_t^4+4(1+\alpha_t^2)\alpha_t^2 L_0^2
+\alpha_t^2 E^2
+\alpha_t^2\bar\sigma \\
\quad = \left[1-2\alpha_t\bar\rho+\alpha^2_t\bar\rho^2+\alpha_t^2-2\alpha^3_t\bar\rho+\alpha^4_t\bar\rho^2
+2\alpha_t \rho-2\alpha^2_t\bar\rho\rho+2\alpha^3_t \rho-2\alpha^4_t\bar\rho\rho\right.\\\quad\quad\left.+4\delta_t L_0 \alpha_t^2+4\delta_t L_0 \alpha_t^4\right] \|x_t-\hat x_t\|^2\\\quad\quad +\left[4(1+\alpha_t^2)\delta_t L_0 \alpha_t^2+4(1+\alpha_t^2)L_0^2
+E^2+\bar\sigma\right]\alpha_t^2 
\\
\quad = \left[1-2\alpha_t(\bar\rho-\rho)+\alpha^2_t(1+\bar\rho^2-2\bar\rho\rho+4\delta_t L_0)-2\alpha^3_t(\bar\rho-\rho)+\alpha^4_t(\bar\rho^2
-2\bar\rho\rho+4\delta_t L_0 )\right] \|x_t-\hat x_t\|^2\\\quad\quad +\left[4(1+\alpha_t^2)\delta_t L_0 \alpha_t^2+4(1+\alpha_t^2)L_0^2
+E^2+\bar\sigma\right]\alpha_t^2 
\\ \quad \le \left[1-\alpha_t(\bar\rho-\rho)\right]\|x_t-\hat x_t\|^2+B\alpha_t^2,
\end{array}
\]
where the last inequality is obtained by simply considering the expression of $\alpha_t$ in equation \eqref{alphacond}.
\end{proof}
After proving Lemma \ref{lem:recursion}, we can now state the main convergence result for Algorithm~\ref{alg:PSDFA}.
\begin{theorem}\label{th:conv}
The sequence generated by Algorithm~\ref{alg:PSDFA} satisfies,
\[
\begin{array}{l}
\mathbb{E}[\phi_{1/\bar{\rho}}(x_{t+1})] \le \mathbb{E}[\phi_{1/{\bar\rho}}(x_t)]+\alpha_t^2\bar B\sqrt{n+2}+\frac{B\bar\rho}{2}\alpha_t^2-\frac{\alpha_t(\bar\rho-\rho)}{2\bar\rho}\mathbb{E}[\|\nabla \phi^{u,t}_{1/{\bar{\rho}}}(x_t)\|^2]\\
\end{array}
\]
and thus,
\[
\begin{array}{l}
\mathbb{E}[\|\nabla \phi^{u,t^*}_{1/{\bar{\rho}}}(x_{t^*})\|^2] = \frac{1}{\sum_{t=0}^T \alpha_t} \sum\limits_{t=0}^T \alpha_t \mathbb{E}[\|\nabla \phi^{u,t}_{1/{\bar{\rho}}}(x_t)\|^2]
\le\\ 
\qquad\qquad\qquad\qquad\le\frac{2\bar\rho}{\bar\rho-\rho} \frac{\phi^{u,0}(x_0)-\min \phi +(\bar B\sqrt{n+2}+\frac{B\bar\rho}{2})\sum\limits_{t=0}^T\alpha_t^2}{\sum\limits_{t=0}^T \alpha_t}.
\end{array}
\]


\end{theorem}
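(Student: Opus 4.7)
The plan is to follow the standard Moreau envelope descent argument, adapted to the smoothed functional $\phi^{u,t}$. First I would use the definition of the Moreau envelope of $\phi$ together with $\hat x_t$ as a candidate minimizer, to write
\[
\phi_{1/\bar\rho}(x_{t+1}) \;\le\; \phi(\hat x_t) + \tfrac{\bar\rho}{2}\|x_{t+1}-\hat x_t\|^2.
\]
Then apply the left inequality of \eqref{eq:smoothacc} to replace $f(\hat x_t)$ with $f_{u_{1,t}}(\hat x_t)$, which gives $\phi(\hat x_t) \le \phi^{u,t}(\hat x_t)$. Take the conditional expectation $\mathbb{E}_t$ and invoke Lemma~\ref{lem:recursion} on $\mathbb{E}_t\|x_{t+1}-\hat x_t\|^2$.

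Next I would recognize that, by definition of $\hat x_t = \text{prox}_{\phi^{u,t}/\bar\rho}(x_t)$,
\[
\phi^{u,t}(\hat x_t) + \tfrac{\bar\rho}{2}\|x_t-\hat x_t\|^2 \;=\; \phi^{u,t}_{1/\bar\rho}(x_t),
\]
which collects the two terms arising above into the Moreau envelope at $x_t$. To convert this to $\phi_{1/\bar\rho}(x_t)$ on the right-hand side (so that a telescoping sum works out), I would use the right inequality of \eqref{eq:smoothacc} together with $u_{1,t}=\alpha_t^2$ to obtain
\[
\phi^{u,t}_{1/\bar\rho}(x_t) \;\le\; \phi_{1/\bar\rho}(x_t) + \alpha_t^2 \bar B\sqrt{n+2}.
\]
Finally, I would replace the residual term $-\tfrac{\bar\rho \alpha_t(\bar\rho-\rho)}{2}\|x_t-\hat x_t\|^2$ using the standard gradient identity $\|\nabla \phi^{u,t}_{1/\bar\rho}(x_t)\|^2 = \bar\rho^2\|x_t-\hat x_t\|^2$ for Moreau envelopes of $\bar\rho$-weakly convex functions (note $\phi^{u,t}$ is $\rho$-weakly convex, so $\bar\rho > \rho$ makes the prox well-defined). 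Taking the full expectation then yields the asserted one-step recursion.

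For the second assertion, I would rearrange the recursion, sum from $t=0$ to $T$, telescope the $\mathbb{E}[\phi_{1/\bar\rho}(x_t)]$ terms, and use $\mathbb{E}[\phi_{1/\bar\rho}(x_{T+1})] \ge \min \phi$ on the left and $\phi_{1/\bar\rho}(x_0) \le \phi(x_0) \le \phi^{u,0}(x_0)$ on the right to obtain
\[
\tfrac{\bar\rho-\rho}{2\bar\rho} \sum_{t=0}^T \alpha_t \,\mathbb{E}[\|\nabla \phi^{u,t}_{1/\bar\rho}(x_t)\|^2] \;\le\; \phi^{u,0}(x_0)-\min\phi + \bigl(\bar B\sqrt{n+2}+\tfrac{B\bar\rho}{2}\bigr)\sum_{t=0}^T\alpha_t^2.
\]
Dividing by $\tfrac{\bar\rho-\rho}{2\bar\rho}\sum_t\alpha_t$ and using the definition of $t^*$ (which makes $\mathbb{E}[\|\nabla\phi^{u,t^*}_{1/\bar\rho}(x_{t^*})\|^2]$ equal to the $\alpha_t$-weighted average) delivers the stated bound.

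The main obstacle is the bookkeeping around the smoothing, namely correctly exchanging $\phi^{u,t}_{1/\bar\rho}$ and $\phi_{1/\bar\rho}$ on both sides of the recursion without accumulating uncontrolled error; the choice $u_{1,t}=\alpha_t^2$ is exactly what makes the accumulated smoothing error $\sum_t u_{1,t}\bar B\sqrt{n+2}$ fit into the $\sum_t \alpha_t^2$ term of the same order as the stochastic error from Lemma~\ref{lem:recursion}. The gradient identity for the Moreau envelope requires the smoothness of $f_{u_{1,t}}$, which is precisely why the smoothing step was introduced in the first place.
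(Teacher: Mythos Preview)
Your proposal is correct and follows essentially the same route as the paper: bound $\phi_{1/\bar\rho}(x_{t+1})$ via $\hat x_t$ as a candidate, apply Lemma~\ref{lem:recursion}, use both sides of \eqref{eq:smoothacc} to pass between $\phi$ and $\phi^{u,t}$ (picking up the $\alpha_t^2\bar B\sqrt{n+2}$ term via $u_{1,t}=\alpha_t^2$), invoke the Moreau-envelope gradient identity $\|\nabla\phi^{u,t}_{1/\bar\rho}(x_t)\|=\bar\rho\|x_t-\hat x_t\|$, then telescope and bound the endpoints. One small inaccuracy in your closing remark: the gradient identity for the Moreau envelope does \emph{not} require smoothness of $f_{u_{1,t}}$ --- it holds for any $\rho$-weakly convex function whenever $\bar\rho>\rho$ --- so the smoothing is needed only to make the two-point estimator \eqref{eq:g} target a well-defined gradient, not for this step.
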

\begin{proof}

We have,
\[
\begin{array}{l}
\mathbb{E}_t[\phi_{1/\bar{\rho}}(x_{t+1})] \le \mathbb{E}_t\left[\phi(\hat x_t)+\frac{\bar\rho}{2}\|\hat x_t-x_{t+1}\|^2\right] \\
\qquad \le \phi(\hat x_t)+\frac{\bar\rho}{2}\left(\|x_t-\hat x_t\|^2+B\alpha_t^2-\alpha_t(\bar\rho-\rho)\|x_t-\hat x_t\|^2\right) \\
\qquad \le \phi^{u,t}(\hat x_t) +\frac{\bar\rho}{2}\left(\|x_t-\hat x_t\|^2+B\alpha_t^2-\alpha_t(\bar\rho-\rho)\|x_t-\hat x_t\|^2\right), \\
\end{array}
\]
where the first inequality comes from the definition of the proximal map, the second by considering the result proved in Lemma \ref{lem:recursion},
and the third by taking into account the first inequality in equation \eqref{eq:smoothacc}. 
\[
\begin{array}{l}
 \phi^{u,t}(\hat x_t) +\frac{\bar\rho}{2}\left(\|x_t-\hat x_t\|^2+B\alpha_t^2-\alpha_t(\bar\rho-\rho)\|x_t-\hat x_t\|^2\right)=\\
\qquad = \phi^{u,t}_{1/{\bar\rho}}(x_t)+\frac{B\bar\rho}{2}\alpha_t^2-\frac{\bar\rho\alpha_t}{2}(\bar\rho-\rho)\|x_t-\hat x_t\|^2 \le  \\
\qquad \le \phi_{1/{\bar\rho}}(x_t)+u_{1,t}\bar B\sqrt{n+2}+\frac{B\bar\rho}{2}\alpha_t^2-\frac{\bar\rho\alpha_t}{2}(\bar\rho-\rho)\|x_t-\hat x_t\|^2 \le \\
\qquad \le \phi_{1/{\bar\rho}}(x_t)+\alpha_t^2\bar B\sqrt{n+2}+\frac{B\bar\rho}{2}\alpha_t^2-\frac{\bar\rho\alpha_t}{2}(\bar\rho-\rho)\|x_t-\hat x_t\|^2= \\
\qquad = \phi_{1/{\bar\rho}}(x_t)+\alpha_t^2\bar B\sqrt{n+2}+\frac{B\bar\rho}{2}\alpha_t^2-\frac{\alpha_t(\bar\rho-\rho)}{2\bar\rho}\|\nabla \phi^{u,t}_{1/{\bar{\rho}}}(x_t)\|^2,
\end{array}
\]
with the first inequality obtained by taking into account the second inequality in equation \eqref{eq:smoothacc}, and the second by using 
definition of $u_{1,t}$ in \eqref{u12:vals}. Now, we take full expectations and obtain:



\[
\begin{array}{l}
\mathbb{E}[\phi_{1/\bar{\rho}}(x_{t+1})] \le \mathbb{E}[\phi_{1/{\bar\rho}}(x_t)]+\alpha_t^2\bar B\sqrt{n+2}\\
\qquad\qquad\qquad\qquad+\frac{B\bar\rho}{2}\alpha_t^2-\frac{\alpha_t(\bar\rho-\rho)}{2\bar\rho}\mathbb{E}[\|\nabla \phi^{u,t}_{1/{\bar{\rho}}}(x_t)\|^2].
\end{array}
\]

The rest of the proof is as in~\cite[Theorem 3.4]{davis2019stochastic}. In particular, summing the recursion, we get,
\[
\begin{array}{l}
\mathbb{E}[\phi_{1/\bar{\rho}}(x_{T+1})] \le \mathbb{E}[\phi^{u,0}_{1/{\bar\rho}}(x_0)]+(\bar B\sqrt{n+2}+\frac{B\bar\rho}{2})\sum\limits_{t=0}^T\alpha_t^2\\
\qquad\qquad\qquad\qquad-\frac{(\bar\rho-\rho)}{2\bar\rho}\sum\limits_{t=0}^T \alpha_t\mathbb{E}[\|\nabla \phi^{u,t}_{1/{\bar{\rho}}}(x_t)\|^2].
\end{array}
\]
Now, noting that 
\[
\phi^{u,t}(x)=\min_y f^{u,t}(y)+r(y)+\frac{\lambda}{2}\|y-x\|^2\ge\min_y \min f+r(y)+\frac{\lambda}{2}\|y-x\|^2 \ge \min \phi,
\]
we can finally state that,
\[
\begin{array}{l}
\frac{1}{\sum_{t=0}^T \alpha_t} \sum\limits_{t=0}^T \alpha_t \mathbb{E}[\|\nabla \phi^{u,t}_{1/{\bar{\rho}}}(x_t)\|^2]
\le\\ 
\qquad\qquad\qquad\qquad\le\frac{2\bar\rho}{\bar\rho-\rho} \frac{\phi^{u,0}(x_0)-\min \phi +(\bar B\sqrt{n+2}+\frac{B\bar\rho}{2})\sum\limits_{t=0}^T\alpha_t^2}{\sum\limits_{t=0}^T \alpha_t}.
\end{array}
\]

Since the left-hand side is by definition $\mathbb{E}[\|\nabla \phi^{u,t^*}_{1/{\bar{\rho}}}(x_{t^*})\|^2]$, we get the final result.

\end{proof}

\section{Numerical Results}\label{s:numerics}
In this section, we investigate the numerical performance of Algorithm~\ref{alg:PSDFA} on a set of standard weakly convex optimization
problems defined in~\cite{davis2019stochastic}. In particular we consider phase retrieval, which seeks to minimize the function,
\begin{equation}\label{eq:phase}
\min_{x\in\mathbb{R}^d} \frac{1}{m} \sum\limits_{i=1}^m \left|\langle a_i,x\rangle^2-b_i\right|
\end{equation}
and blind deconvolution, which seeks to minimize 
\begin{equation}\label{eq:blind}
\min_{(x,y)\in\mathbb{R}^d} \frac{1}{m} \sum\limits_{i=1}^m \left|\langle u_i,x\rangle\langle v_i,y\rangle-b_i\right|
\end{equation}
We generate random Gaussian measurements in $N(0,I_{d\times d}$) and 
a target signal $\bar{x}$ uniformly on the random sphere to compute $b_i$ with dimensions $(d,m)=(10,30),(20,60),(40,120)$.
We compare Algorithm~\ref{alg:PSDFA} to the stochastic subgradient method and the stochastic proximal
method in~\cite{davis2019stochastic}. We generate ten runs of each for every dimension and pick the best one according to the
final objective value. The total number of iterations used in all cases is $1000m$.

For phase retrieval we generate $\alpha_0$ uniformly from \texttt{[1e-5,1e-4]}. We show the path of the objective values
in Figures~\ref{fig:phase1},~\ref{fig:phase2} and~\ref{fig:phase3}.

For blind deconvolution we generate $\alpha_0$ uniformly from \texttt{[1e-6,1e-3]}. We show the path of the objective values
in Figures~\ref{fig:blind1},~\ref{fig:blind2} and~\ref{fig:blind3}.

It is interesting that for the smaller dimensional problems, the zero order algoritm performs on par with the ones that use
the stochastic subgradient oracle, sometimes even outperforming them (probably owing to the additional noise, thus one out of
the ten runs can become a favorable outlier). For the largest dimension it is more evident that the performance of the
derivative-free algorithm is slower, albeit still convergent. 

\begin{center}
\begin{figure}
\includegraphics[scale=0.7,trim=3cm 9cm 3cm 9.5cm,clip,width=\textwidth]{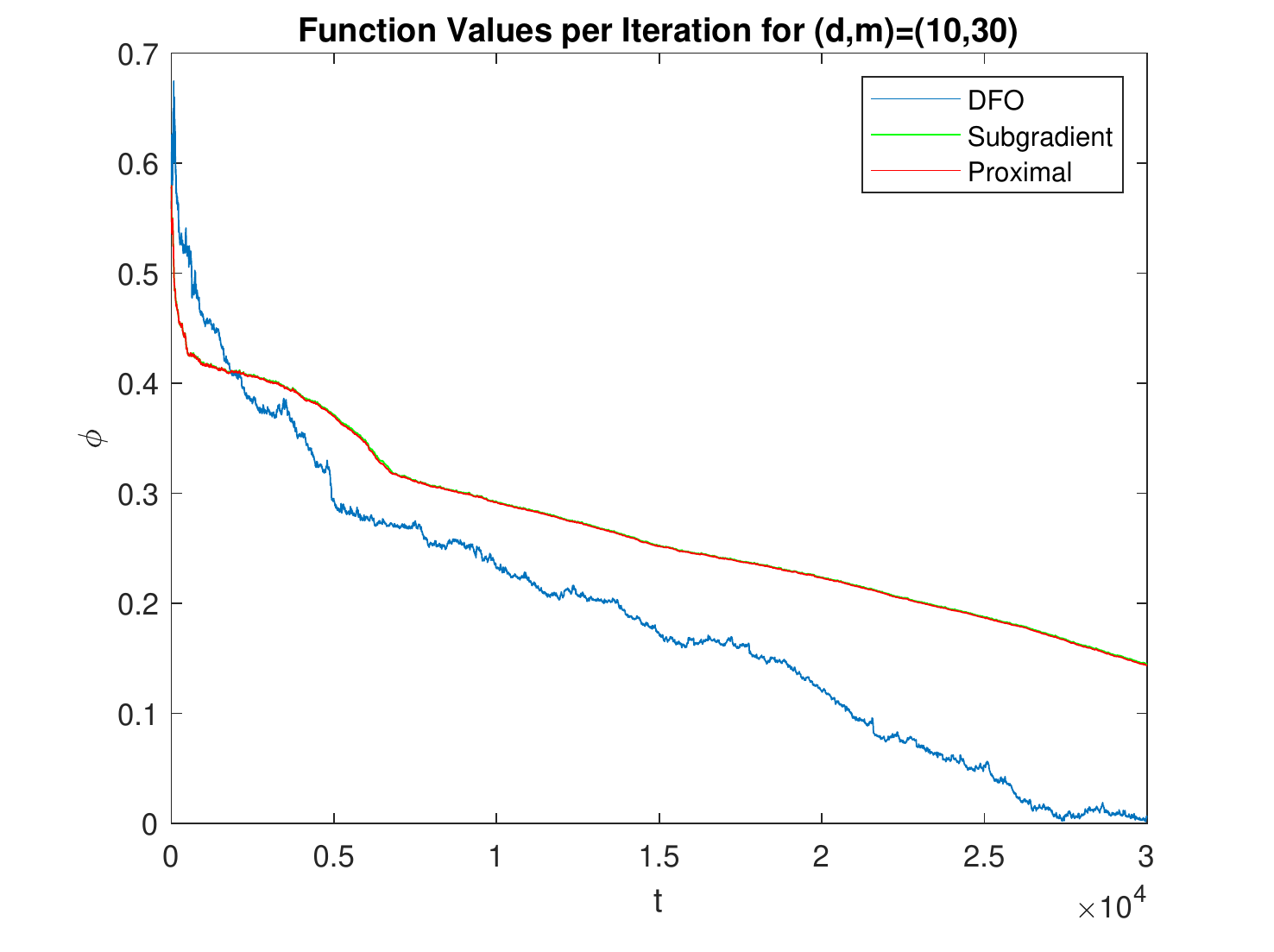}
\caption{Convergence of the function values for phase retrieval,~\eqref{eq:phase}. Interestingly, in this case, DFO is able to outperform the (sub)derivative based methods, in particular exhibiting an accelerate convergence rate in the early iterations~\label{fig:phase1}}
\end{figure}
\end{center} 

\begin{center}
\begin{figure}
\includegraphics[scale=0.7,trim=3cm 9cm 3cm 9.5cm,clip,width=\textwidth]{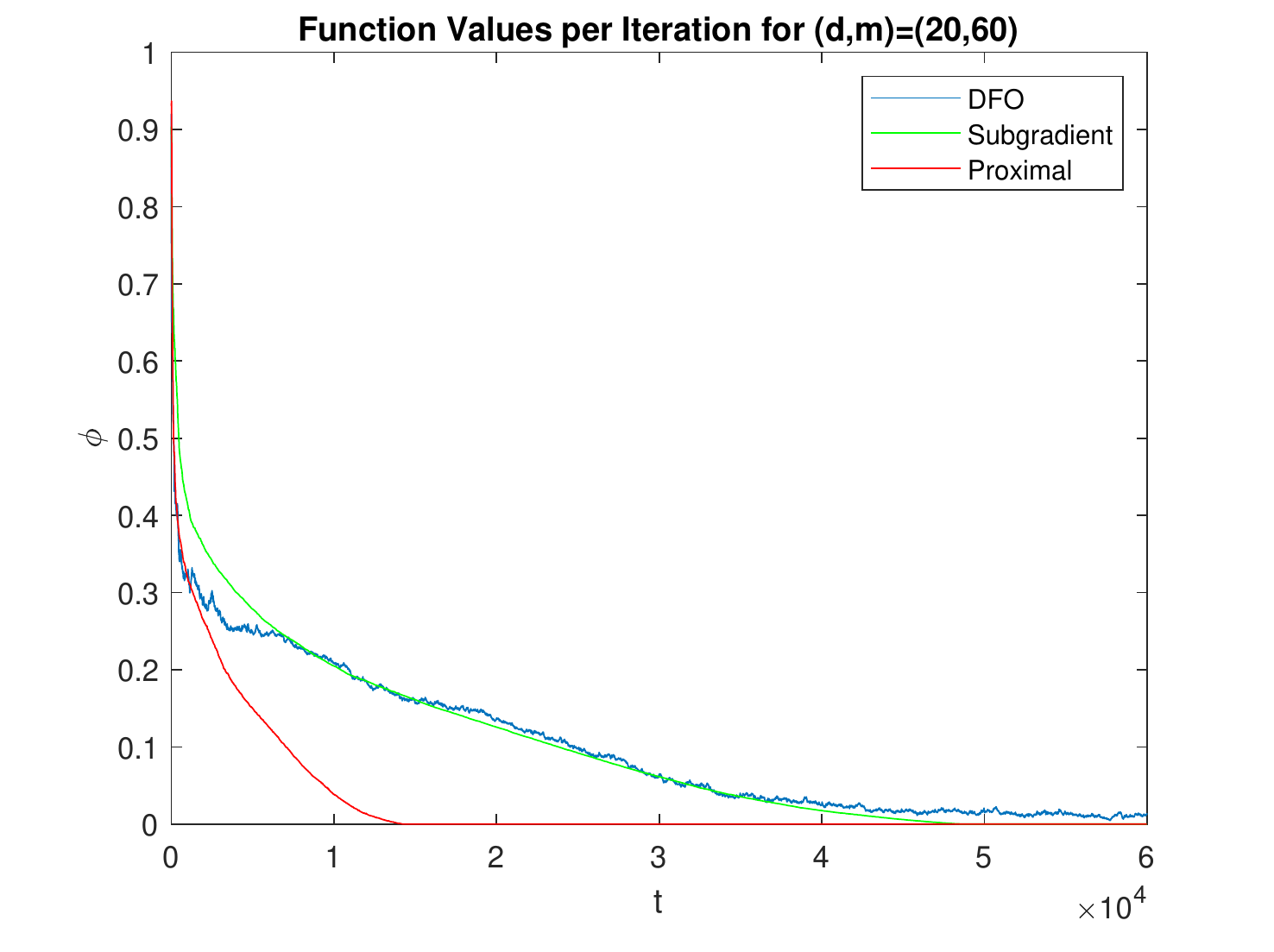}
\caption{Convergence of the function values for phase retrieval,~\eqref{eq:phase}.
In this case, DFO is slower than the Proximal method, but still competitive with the Subgradient method.~\label{fig:phase2}}
\end{figure}
\end{center} 

\begin{center}
\begin{figure}
\includegraphics[scale=0.7,trim=3cm 9cm 3cm 9.5cm,clip,width=\textwidth]{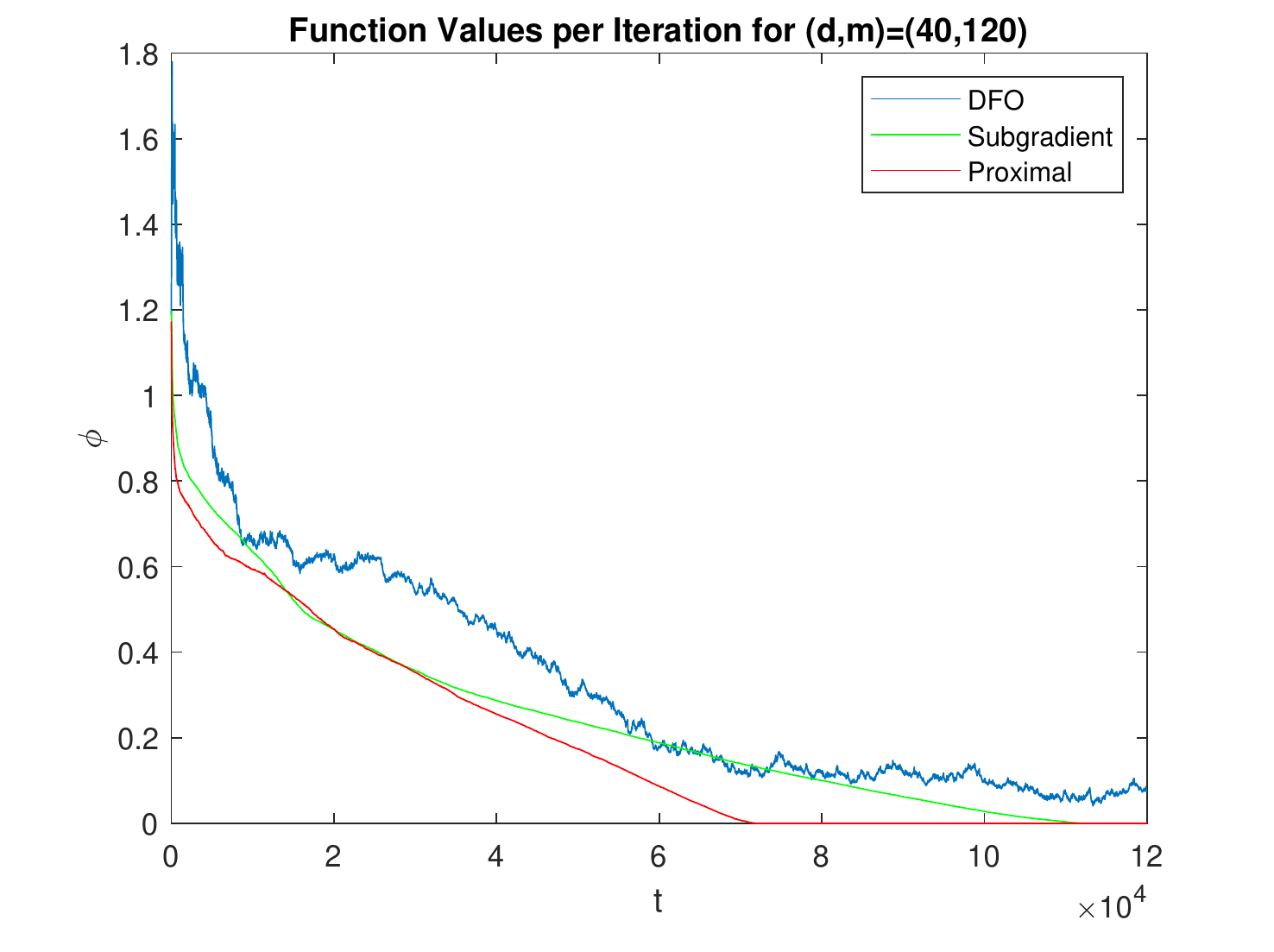}
\caption{Convergence of the function values for phase retrieval,~\eqref{eq:phase}.
For the larger scale case, although the DFO algorithm is still asymptotically convergent, it is seems to be slightly slower than the (sub)derivative based methods.~\label{fig:phase3}}
\end{figure}
\end{center} 

\begin{center}
\begin{figure}
\includegraphics[scale=0.7,trim=3cm 9cm 3cm 9.5cm,clip,width=\textwidth]{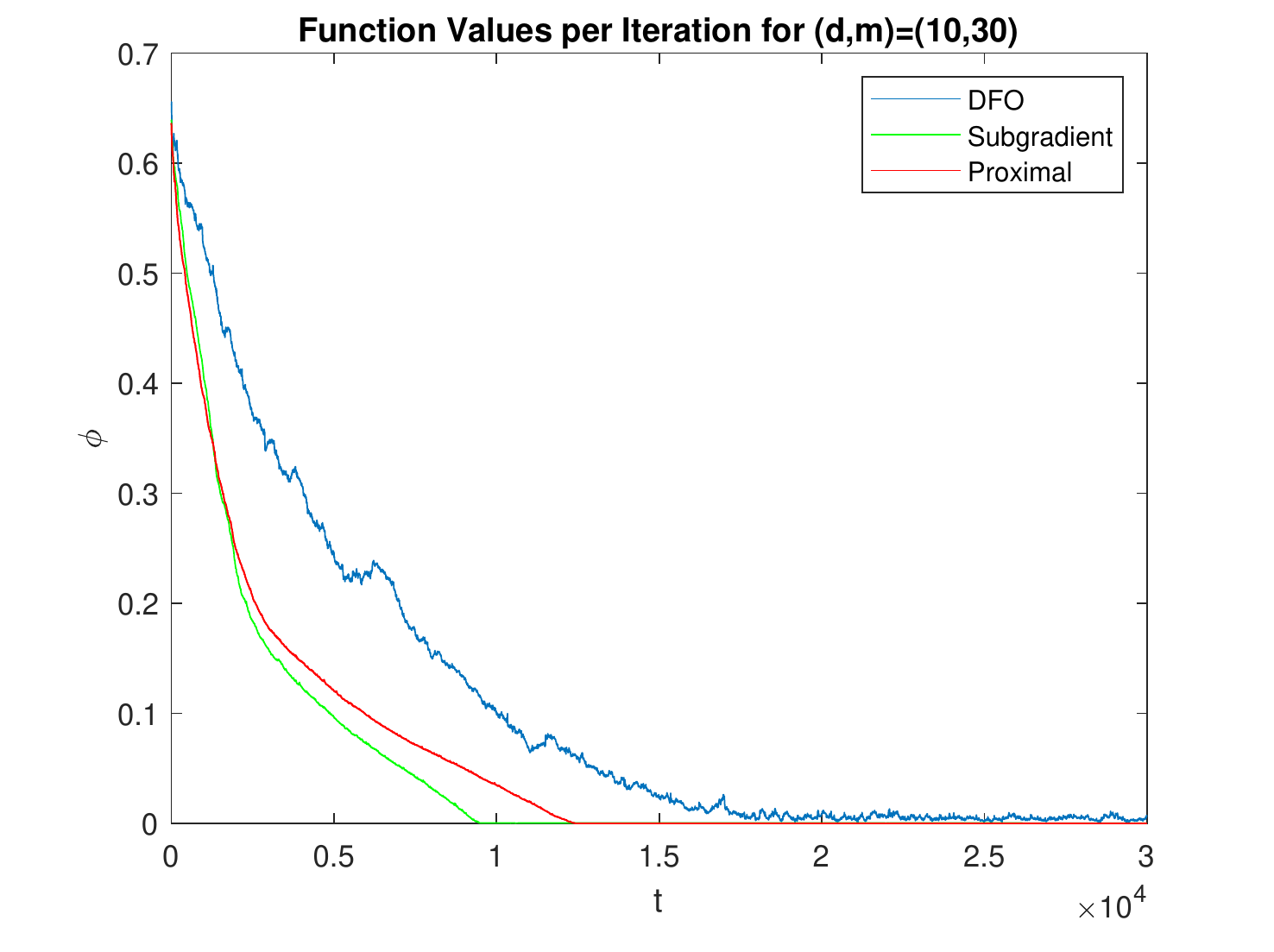}
\caption{Convergence of the function values for blind deconvolution,~\eqref{eq:blind}. In this case, DFO is competitive with the (sub)derivative based methods.~\label{fig:blind1}}
\end{figure}
\end{center} 

\begin{center}
\begin{figure}
\includegraphics[scale=0.7,trim=3cm 9cm 3cm 9.5cm,clip,width=\textwidth]{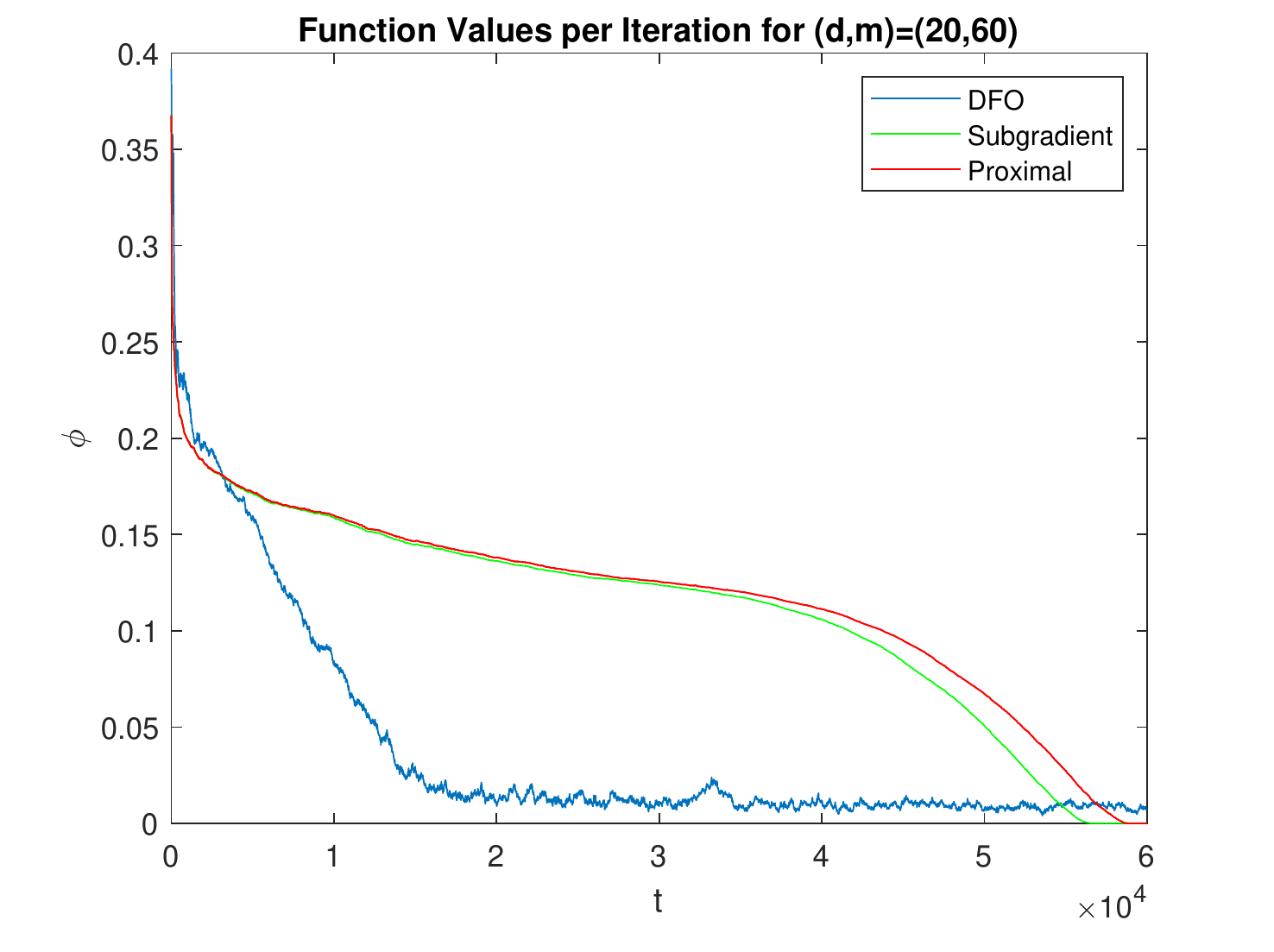}
\caption{Convergence of the function values for blind deconvolution,~\eqref{eq:blind}.
Interestingly, in this case, DFO, albeit being a bit noisy, decreases faster than the (sub)derivative based methods.~\label{fig:blind2}}
\end{figure}
\end{center} 

\begin{center}
\begin{figure}
\includegraphics[scale=0.7,trim=3cm 9cm 3cm 9.5cm,clip,width=\textwidth]{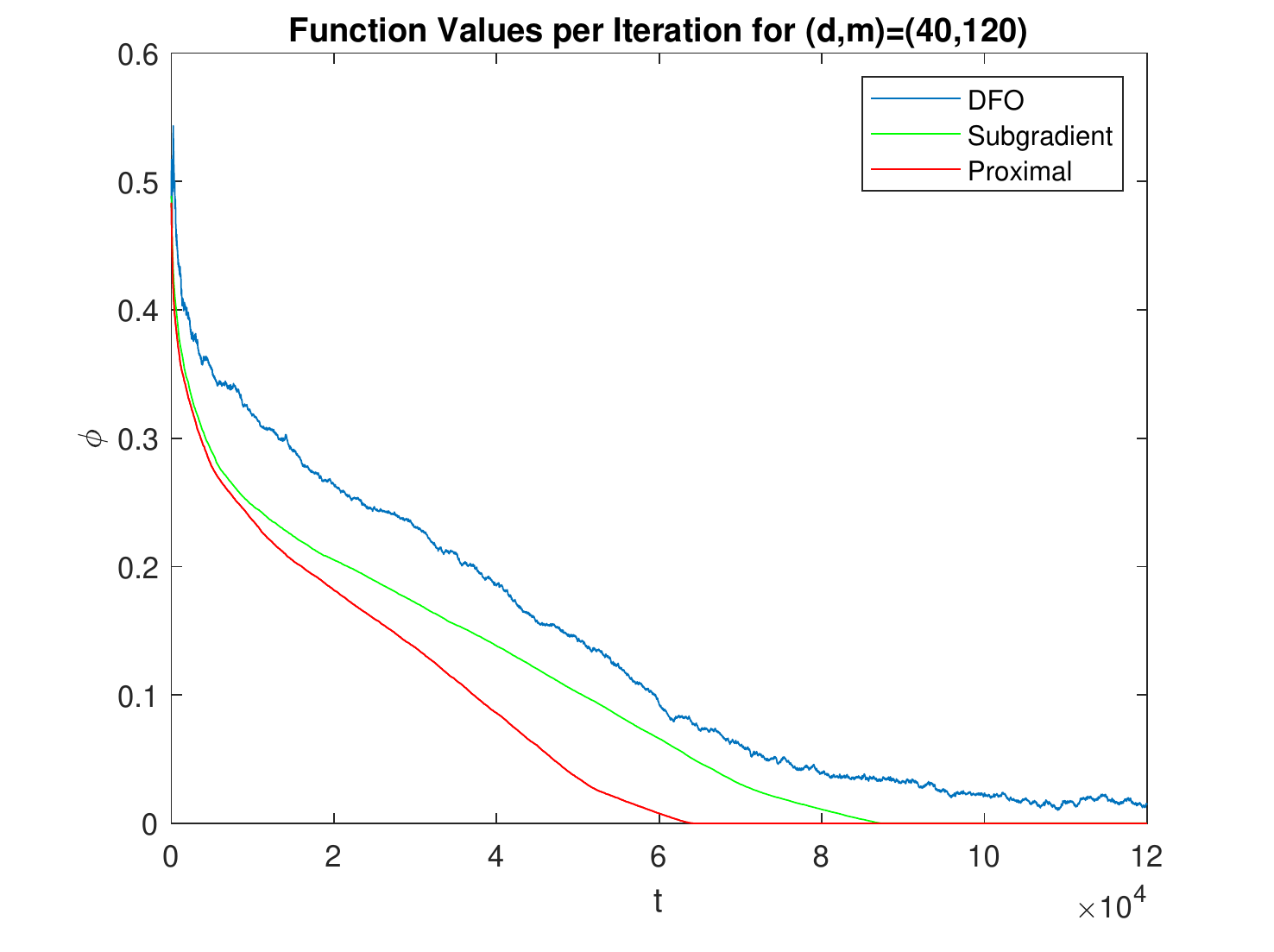}
\caption{Convergence of the function values for blind deconvolution,~\eqref{eq:blind}.
For the larger scale case, although the DFO algorithm is still asymptotically convergent, it is seems to be again slightly slower than the (sub)derivative based methods.~\label{fig:blind3}}
\end{figure}
\end{center}

\section{Conclusion}\label{s:conclusion}

In this paper we studied, for the first time, minimization of a stochastic weakly convex function without
the presence of an oracle of a noisy estimate of the subgradient of the function, i.e., in the context
of derivative-free or zero order optimization. We were able to derive theoretical convergence rate results
on par with the standard methods for stochastic weakly convex optimization, and demonstrated the algorithm's
efficacy on a couple of standard test cases. In expanding the scope of zero order optimization, we hope that this
work highlights the potential of derivative free methods in general, and the two point smoothed function
approximation technique in particular, to an increasingly wider class of problems. 

\bibliographystyle{plain}
\bibliography{refs}

\end{document}